\theoremstyle{plain}
\newtheorem{theorem}{Theorem}
\newtheorem{lemma}{Lemma}
\newtheorem{corollary}{Corollary}
\newtheorem{proposition}{Proposition}
\theoremstyle{definition}
\newtheorem{definition}{Definition}
\numberwithin{equation}{section}
\newcommand{\be}{\begin{enumerate}}
\newcommand{\ee}{\end{enumerate}}
\newcommand{\beq}{\begin{equation}}
\newcommand{\eeq}{\end{equation}}
\newcommand{\bprop}{\begin{proposition}}
\newcommand{\eprop}{\end{proposition}}
\newcommand{\complex}{\mathbb{C}}
\newcommand{\reals}{\mathbb{R}}
\newcommand{\integers}{\mathbb{Z}}
\DeclareMathOperator{\ct}{ct}
\newcommand{\s}{\scriptstyle}
\newcommand{\ko}[1][]{K(#1, \,t)}
\newcommand{\kop}[1][]{K^\prime(#1, \,t)}
\newcommand{\ksf}[1][]{K_{#1}}
\newcommand{\kpsf}[1][]{K_{#1}^\prime}
\newcommand{\Wex}{\widehat{W}}
\newcommand{\ttau}{\tau}
\newcommand{\Fd}{\widetilde{F}}
\newcommand{\Fdo}{F_0}
\newcommand{\Ff}{F}
\newcommand{\bcoeff}{b}
\newcommand{\dcoeff}{d}
\newcommand{\ttj}[1][j]{\ttau^{#1}}
\newcommand{\asf}[1][t,q]{a^{\s \Lambda}_{\s \lambda}(#1)} 
\newcommand{\csf}[1][t,q]{c^{\s \Lambda}_{\s \lambda}(#1)} 
\newcommand{\Hp}{\mathcal{H}}
\newcommand{\Hcl}{\Theta}
\newcommand{\cform}[2]{( #1 | #2)}
\newcommand{\anamoly}{s_\Lambda(\lambda)}
\newcommand{\Uplus}{U^+}
\newcommand{\gtil}{\widetilde{G}}
\newcommand{\gotil}{\gtil_0}
\newcommand{\vdag}{v^\dag}
\newcommand{\xdag}{x^\dag}
\newcommand{\ydag}{y^\dag}
\newcommand{\sign}{\mathrm{sign}}
\newcommand{\Hmf}{\theta}
\newcommand{\Lhp}{\mathcal{L}}
\newcommand{\bep}{\bar{\epsilon}}
\newcommand{\kfp}[3][t]{m^{#2}_{#3}(#1)}
\newcommand{\alm}[1][t,q]{a^{\s \Lambda}_{\s \lambda}(#1)}
\newcommand{\Wfin}{\overset{\circ}{W}}
\newcommand{\Qfin}{\overset{\circ}{Q}}
\newcommand{\Pfin}{\overset{\circ}{P}}
\newcommand{\uhp}{\mathbb{H}}
\newcommand{\etat}{\eta^{(-3)}}
\newcommand{\cort}[1][]{\alpha_{#1}^{\scriptscriptstyle\vee}}
\newcommand{\rt}[1][]{\alpha_{#1}}
\newcommand{\kma}{\mathfrak{g}}
\newcommand{\csa}{\mathfrak{h}}
\newcommand{\tsf}{$t$-string function\xspace}
\newcommand{\tsfs}{$t$-string functions\xspace}
\newcommand{\rop}{\Delta_+}
\newcommand{\afsl}{\ensuremath{A_1^{(1)}}}
\begin{document}

\title[]{The $t$-analog of string functions for $A_1^{(1)}$ and Hecke indefinite modular forms.}
\author{Sachin S. Sharma}
\address{Tata Institute of Fundamental Research\\Colaba, Mumbai 400005, India.}
\email{sachin@math.tifr.res.in}
\author{Sankaran Viswanath}
\address{The Institute of Mathematical Sciences\\
Taramani,
Chennai 600113, India.}
\email{svis@imsc.res.in}
\thanks{The second author acknowledges support from DAE under a XII plan project.}
\subjclass[2010]{33D67, 17B67, 11F22}
\keywords{t-string function, Hecke indefinite modular form, affine Lie algebras}

\begin{abstract}
We study the generating functions for Lusztig's $t$-analog of weight multiplicities associated to integrable highest weight representations of the simplest affine Lie algebra $A_1^{(1)}$. These generating series, termed {\em $t$-string functions}, are $t$-analogs of the string functions of $A_1^{(1)}$. String functions are well-studied for all affine Lie algebras and have important modularity properties. However, they are completely known in closed form only for the Lie algebra $A_1^{(1)}$; in this case, Kac and Peterson showed that the string functions can be obtained in terms of certain indefinite modular forms of Hecke. In this paper, we generalize this description to the $t$-string functions of $A_1^{(1)}$. 

\end{abstract}
\maketitle

\section{Introduction}

\subsection{}
Let $\kma$ be an affine Kac-Moody algebra. Let $\Lambda$ be a dominant integral weight of $\kma$, and $L(\Lambda)$ the corresponding irreducible highest weight representation. A weight $\lambda$ of $L(\Lambda)$ is said to be {\em maximal} if $\lambda + \delta$ is not a weight of this module, where $\delta$ is the null root of $\kma$. 
To understand the structure of the module $L(\Lambda)$, one studies the generating functions 
$$ \asf[q] := \sum_{k \geq 0} \dim (L(\Lambda)_{\lambda -  k\delta})\; q^k$$
of weight space dimensions (weight multiplicities) along the $\delta$-string $\{\lambda - k \delta: k \geq 0\}$, where $\lambda$ ranges over the maximal dominant weights of $L(\Lambda)$.
The corresponding {\em string function} $\csf[\tau]$ is defined to be $$\csf[\tau] = q^{\anamoly} \, \asf[q]$$ where 
 $q=e^{2\pi i \tau}$ with $\tau$ in the complex upper half plane, and $\anamoly$ is a rational number (see equation \eqref{anamoly}) which gives the leading power of $q$ in the expansion.  String functions  are modular forms for certain congruence subgroups of $\mathrm{SL}_2(\integers)$ and play an important role in the theory of characters of representations of affine Kac-Moody algebras \cite[chapter 13]{kac}.

Next, we consider Lusztig's $t$-analog of weight multiplicities $\kfp{\Lambda}{\lambda}$ (affine Kostka-Foulkes polynomials). 
Here $\Lambda$ is a dominant integral weight of $\kma$ and $\lambda$ is a dominant weight of $L(\Lambda)$. The $\kfp{\Lambda}{\lambda}$ are polynomials in the indeterminate $t$ which interpolate between the affine Hall-Littlewood functions and the Weyl characters \cite{svis-kfp}.  Further, $\kfp{\Lambda}{\lambda}$ coincides with 
 the Poincar\'{e} polynomial of (the associated graded space of) the affine Brylinski-Kostant filtration on the  
weight space $L(\Lambda)_\lambda$ \cite{slofstra}. Thus they have non-negative integer coefficients, and reduce to the usual weight multiplicities at $t=1$, i.e., 
$\kfp[1]{\Lambda}{\lambda} =  \dim (L(\Lambda)_{\lambda})$. It is therefore natural to consider the generating functions
$$ \asf := \sum_{k \geq 0} \kfp{\Lambda}{\lambda - k\delta}\; q^k$$
for maximal dominant weights $\lambda$ of $L(\Lambda)$. The $t$-analog of the corresponding string function is defined by $\csf[t,\tau] = q^{\anamoly} \, \asf$, with $q= e^{2\pi i \tau}$ as before.

In this paper, we restrict attention to the simplest affine algebra $\kma = A_1^{(1)}$ (affine $\mathfrak{sl}_2$). This is the only case for which an explicit description of all string functions is known \cite{kp-bulletin,kacpeterson}:
\begin{theorem} \rm{(Kac-Peterson)} \label{kpthm}
Let $\kma = A_1^{(1)}$. Let $\Lambda$ be a dominant integral weight of $\kma$, and $\lambda$ a maximal dominant weight of $L(\Lambda)$. Then 
\beq\label{kpthmeq}
\csf[\tau] = \Hmf_L(\tau) \, \eta(\tau)^{\s -3}.
\eeq
 Here $L=L(\Lambda,\lambda)$ is a certain translate of an even rank 2 lattice of signature $(1,1)$, $\Hmf_L(\tau)$ is the corresponding {\em Hecke indefinite modular form}, and $\eta(\tau)$ is the Dedekind eta function.
\end{theorem}
We elaborate on these notions in the next subsection.
We remark here that the first results on string functions of $A_1^{(1)}$ were obtained by Feingold-Lepowsky in \cite{fein-lepow}. They computed the string functions of small level, and in some cases, obtained formulas different from those in the later work of Kac-Peterson. Still different formulas for all of the string functions of $A_1^{(1)}$ were given later by Lepowsky-Primc \cite{lepow-primc}.

The main result of this paper is a description of the $t$-string functions $\csf[t,\tau]$ of $ A_1^{(1)}$ that extends \eqref{kpthmeq}. 
We show that  $\Hmf_L(\tau)$ and $\eta(\tau)^{\s -3}$ in \eqref{kpthmeq} admit 
deformations $\Hcl_L(\tau, t, z)$ and $\etat(\tau, t, z)$ such that $\csf[t, \tau]$ can be obtained as the constant term of the product of these two functions with the Poisson kernel.

\subsection{}
In order to state our main theorem, we recall some background from \cite{kacpeterson}.
Let $\kma = A_1^{(1)}$. Fix a dominant integral weight $\Lambda$ of $\kma$ of level $m \geq 1$, 
and let $\lambda$ be a maximal dominant weight of $L(\Lambda)$.
For $v = (x,y) \in \reals^2$, define
$$ N(v)= 2 (m+2)x^2  - 2m y^2.$$ 
This defines a quadratic form on $\reals^2$.
We also define $\sign(v) = 1$ for $x \geq 0$ and $-1$ for $x<0$. 
Let $M:=\integers^2$ and let $M^*$ denote the lattice dual to $M$ with respect to this form.

Let $O(N)$ denote the group of invertible linear operators on $\reals^2$ 
preserving $N$, and $SO_0(N)$ be the connected component of 
$O(N)$ containing the identity. Consider its subgroups 
$G := \{g \in SO_0(N): g M =M \}$ and $G_0 := \{g \in G: g \text{ fixes } M^*/M \text{ pointwise}\}$.
The set $$\Uplus:=\{(x,y) \in \reals^2: N(x,y) >0\}$$ is preserved under the action of $O(N)$ on $\reals^2$.
We let $x_0:=\frac{\cform{\Lambda + \rho}{\cort[1]}}{2(m+2)}$ and
$y_0:= \frac{\cform{\lambda}{\cort[1]}}{2m}$, where $\cort[1]$ is the simple coroot corresponding to the underlying finite Dynkin diagram of type 
$\mathfrak{sl}_2$, and $\rho$ is the Weyl vector. We have $0 < x_0 < \frac{1}{2}$ and $0 \leq y_0 \leq \frac{1}{2}$.
If $x_0 \geq y_0$, define $(A,B) = (x_0, y_0)$, else $(A,B) = (\frac{1}{2} - x_0, \frac{1}{2} - y_0)$.
Then, $(A,B) \in M^*$, and we set $L= L(\Lambda, \lambda) :=(A,B) + M$.
The Hecke indefinite modular form that occurs in theorem \ref{kpthm} is the following sum over $G_0$-orbits in $L \cap \Uplus$:
\beq\label{heckemfeq}
\Hmf_L(\tau) := \sum_{\substack{v \in\, L \,\cap\, \Uplus \\ v \text{ mod } G_0}} \sign(v) \,   e^{\pi i \tau N(v)}.
\eeq
This is an absolutely convergent sum for $\tau$ in the upper half plane $\uhp$, and defines a cusp form of weight 1 \cite{hecke}.
An equivalent expression for the sum in \eqref{heckemfeq} can be obtained by restricting $v$ to lie in a fundamental domain for the action of $G_0$ on $\Uplus$. The sum then takes the form
$$ \sum_{v  \in D} (\pm) e^{\pi i \tau (N(v) + l(v))},$$
where $l$ is a linear function of $v$ and $D$ is a subset of $\integers^2$ on which $N$ is non-negative.
Such sums appeared in earlier work of Rogers \cite{rogers}, but were first studied systematically by Hecke in \cite{hecke}, where he established their modularity properties. Kac and Peterson used theorem \ref{kpthm}, together with their computation of string functions of low level, to obtain interesting identities 
expressing Hecke modular forms as eta products. Andrews' attempt to understand some of these new identities using  $q$-series methods \cite{andrews1} (see also Bressoud \cite{bressoud}) eventually led him to consider other series of Hecke type, and apply them to the study of Ramanujan's mock theta functions \cite{andrews2, andrews3}.  Since then, 
such series have featured prominently in the vast body of work surrounding the mock theta functions (see for instance \cite{hickerson1, hickerson2, zwegers}).

\subsection{}
Now let $t, z \in \complex$. We first define a deformation $\Hcl_L(\tau, t, z)$ of the Hecke modular form $\Hmf_L(\tau)$ by:
 \begin{equation}\label{hcldef}
\Hcl_L(\tau, t, z) = \sum_{\substack{v \in\, L \,\cap\, \Uplus \\ v \text{ mod } G_0}} \sign(v) \,  e^{\pi i \tau  N(v)}\, \Omega(v, t, z),
\end{equation}
where $\Omega$ is a weighting factor. To define $\Omega$, we need the following subgroup of $O(N)$:
$$\gtil := \langle \zeta \rangle \ltimes G,$$ where $\zeta$ is the reflection about the $Y$-axis, i.e., 
$\zeta(x,y):=(-x,y)$. Then, $\gtil$ contains $G_0$ and 
$$\Fd:=\{(x, y) \in \reals^2: x > 0 \text{ and } 0 \leqslant y \leqslant x\} \cup \{(x,y) \in \reals^2: 0 > y > x\}$$
is a fundamental domain for the action of $\gtil$ on $\Uplus$ (see \S \ref{secthree}). Given $v = (x,y) \in \Uplus$, we define $\vdag = (\xdag,\ydag)$ to 
be the unique element of $\Fd$ which is in the $\gtil$-orbit of $v$. Then, $\Omega$ is defined by
\begin{equation}\label{Omegadef}
\Omega(v, t, z) = t^{2(\ydag-B)} \, z^{\left((m+2) \xdag - m\ydag - \frac{1}{2}\right)}.
\end{equation}
It will turn out that $2(\ydag-B)$ and $(m+2) \xdag - m\ydag - 1/2$ are both integer-valued, piecewise linear functions of $(x,y) \in L$.
The sum in \eqref{hcldef} is absolutely convergent for $\tau \in \uhp, |t| >0, |z|>0$, and satisfies
 $\Hcl_L(\tau,1,1) = \Hmf_L(\tau)$. 

Next, consider the factor involving the Dedekind eta function in \eqref{kpthmeq}:
$$\eta(\tau)^{-3} = e^{-i \pi \tau/4}  \prod_{n=1}^\infty \frac{1}{(1-e^{2 \pi i n \tau})^3}\;\;.$$
We deform this to the function
\beq\label{etatdef}
\etat(\tau, t, z)=e^{-i \pi  \tau/4} \prod_{n=1}^\infty \frac{1}{(1-te^{2 \pi i n \tau})(1-tze^{2 \pi i n \tau}) (1-tz^{-1}e^{2 \pi i n \tau})}.
\eeq
This converges absolutely for $(\tau,t,z)$ in the region
$$\{\tau \in \uhp, \;\; |te^{2 \pi i \tau}| < 1, \;\; |te^{2 \pi i \tau}|< |z|< |te^{2 \pi i \tau}|^{-1}\}$$
and  satisfies $\etat(\tau, 1, 1) = \eta(\tau)^{-3}$. 
Our final ingredient is the function
$$ P(t,z) = \frac{1-t^2}{(1-tz)(1-tz^{-1})} = \sum_{n \in \integers} t^{|n|} z^n.$$
The series converges for $|t|<1$ and $|t|<|z|<|t|^{-1}$. We note that for $t$ real and $|z|=1$, this is just the Poisson kernel of the unit disc. 
Observe that the three functions $\Hcl_L, \etat$ and $P$ have a common domain of definition
$$\{ (\tau,t,z): \tau \in \uhp, \;\; 0 <|t|<1, \;\; |t| < |z| < |t|^{-1}\}.$$ The series/product defining these functions are in fact 
uniformly convergent on compact subsets of this domain. Thus $\Hcl_L, \etat$ and $P$ are holomorphic in each variable $\tau, t, z$.
We view them as functions of $z$ and consider their Laurent series in the annulus  $|t| < |z| < |t|^{-1}$.
We are now in a position to state our main theorem:

\begin{theorem}\label{mainthm} 
Let $\tau \in \uhp, \;0 < |t|<1$. Then, $\csf[t,\tau]$ is the constant term 
{\em (i.e., the coefficient of $z^0$ in the Laurent series in the annulus  $|t| < |z| < |t|^{-1}$)} 
of the product 
$$\Hcl_L(\tau, t, z) \, \etat(\tau, t, z) \, P(t,z).$$
\end{theorem}

The proof of this theorem appears in sections \ref{sectwo} and \ref{secthree}.  In the course of the proof, 
we will derive $t$-analogs of the results of  Kac and Peterson \cite[\S 5]{kacpeterson}.

\subsection{}
Given a holomorphic function $f(z)$ in the annulus $|t| < |z| < |t|^{-1}$, the coefficient of $z^0$ in its Laurent series is just the integral 
$ \int_{0}^1 f(e^{2\pi i u}) \, du.$
Thus, theorem 2 implies that 
$$\csf[t, \tau] =  \int_{0}^1 \Hcl_L(\tau, t, e^{2\pi i u}) \; \etat(\tau, t, e^{2 \pi i u}) \; P(t,e^{2 \pi i u})\, du.$$
Now, if $0<t<1$, then as observed earlier, $P(t,e^{2 \pi i u})$ is the Poisson kernel of the unit disc. The classical fact that the Poisson kernel is an approximate identity now implies that $\lim_{t \to 1^-} \csf[t, \tau] = \Hcl_L(\tau, 1,1) \; \etat(\tau,1,1)$. Thus, theorem \ref{mainthm} reduces to theorem \ref{kpthm} as $t \to 1^-$.

\subsection{}
We point out that a very different constant term expression for the $t$-string function was obtained in \cite[equation (5.8)]{svis-kfp} 
for all affine Lie algebras. Using this and classical summation identities for $q$-series, the \tsfs of levels $1, 2, 4$ 
for $\kma = \afsl$ were obtained in closed form in \cite{sv-a11}.  One can combine the results of \cite{sv-a11} with theorem \ref{mainthm} to obtain $t$-analogs of the Kac-Peterson identities (for levels $1, 2, 4$) relating Hecke modular forms to  eta products. But these would now be constant term identities, and thus not as explicit as in the $t=1$ case.

\medskip
\noindent
{\bf Acknowledgements:} The second author would like to thank E.K.Narayanan for useful discussions regarding this work.

\section{Preliminary Lemmas}\label{sectwo} 
\subsection{} 
We assume throughout that $\kma = A_{1}^{(1)}$, the rank 2 affine Lie algebra with generalized Cartan matrix $\left( \begin{smallmatrix} 2 & -2 \\ -2 & 2 \end{smallmatrix}\right)$. Let $\csa$ denote the Cartan subalgebra of $\kma$.
We let $\rt[0], \rt[1] \in \csa^*$ denote the simple roots, $\delta = \rt[0] + \rt[1]$ the null root, and $\rop = \{\rt[1] + (n-1)\delta, \; -\rt[1] + n\delta, \; n \delta: n \geq 1\}$ the set of positive roots of $\kma$. Let $Q = \integers \rop$ be the root lattice of $\kma$ and $Q_+ := \integers_{\geq 0}\, \rop$. Similarly, let $P$ denote the weight lattice of $\kma$, and $P_+$ the set of dominant integral weights. The standard basis \cite[chapter 6]{kac} of $\csa^*$ is $\{\alpha_1, \delta, \Lambda_0\}$, where $\Lambda_0$ is a fundamental weight corresponding to the extended node of the Dynkin diagram.

Let $W$ denote the Weyl group of $\kma$; this is generated by the two simple reflections $r_0, r_1$. There is a non-degenerate, $W$-invariant, symmetric bilinear form $\cform{\cdot}{\cdot}$ on $\csa^*$ defined by the relations
$\cform{\rt[1]}{\rt[1]} = 2$, $\cform{\Lambda_0}{\delta} =1$ and $\cform{\rt[1]}{\delta} = \cform{\rt[1]}{\Lambda_0} = \cform{\delta}{\delta} = \cform{\Lambda_0}{\Lambda_0} =0 $. Given $\lambda \in \csa^*$, its {\em level} is defined to be $\cform{\lambda}{\delta}$. For $\lambda$ of level $m$, we have 
$$\lambda = \bcoeff(\lambda) \, \alpha_1 + \dcoeff(\lambda) \, \delta + m \Lambda_0$$ where $\bcoeff(\lambda) = \cform{\lambda}{\rt[1]}/2$ and $\dcoeff(\lambda) = \cform{\lambda}{\Lambda_0}$. We also note that if $\lambda \in Q$, then its level is zero, and $\bcoeff(\lambda), \dcoeff(\lambda)$ are both integers.
 Let $\rho$ denote the Weyl vector of $\kma$, defined by the relations 
$\cform{\rho}{\Lambda_0}=0, \cform{\rho}{\rt[1]}=1, \cform{\rho}{\delta}=2$.

Next, recall that the $t$-Kostant partition function $\ko[\beta]$ of $\kma$ is defined by the relation:
\begin{equation}\label{tkodef}
\prod_{\alpha\in \rop} \frac{1}{1-te(-\alpha)} = \sum_{ \beta \in Q_+} \ko[\beta] \,e(-\beta).
\end{equation}
For a general affine Lie algebra, the positive roots must be counted with multiplicities on the product side of \eqref{tkodef}; 
 however all root multiplicities are 1 for $\kma = A_1^{(1)}$.
The $t$-Kostant partition function reduces to the Kostant partition function at $t=1$.

Now, for $\Lambda \in P_+$, let $L(\Lambda)$ denote the corresponding irreducible highest weight representation of $\kma$, 
with weight space decomposition $L(\Lambda) = \oplus_{\gamma \in P} L(\Lambda)_\gamma$. Given $\lambda \in P_+$, 
Lusztig's $t$-analog of weight multiplicity or (affine) Kostka-Foulkes polynomial  $\kfp{\Lambda}{\lambda}$ is defined by 
$$\kfp{\Lambda}{\lambda} := \sum_{w \in W} \epsilon(w) \, \ko[w(\Lambda + \rho) - (\lambda + \rho)].$$ 
Here $\epsilon$ is the sign character of the Weyl group $W$, given by $\epsilon(w) = (-1)^{\ell(w)}$, where
$\ell(w)$ is the minimum number of factors in an expression for $w$ as a product of simple reflections.

By Kostant's weight multiplicity formula, we have 
$\kfp[1]{\Lambda}{\lambda} =  \dim (L(\Lambda)_{\lambda})$.


\subsection{}
We recall the {\em dot action} of $W$ on $\csa^*$. Given $w \in W$ and $\gamma \in \csa^*$, we define  
$$ w \cdot \gamma := w(\gamma + \rho) - \rho.$$
This action leaves $Q$ and $P$ invariant.
Next, following Kac and Peterson \cite{kacpeterson}, we introduce an auxiliary function $\kop[\beta]$ which has better properties than $\ko[\beta]$.
\begin{definition}
Given $\beta \in Q$, define $\kop[\beta]$ as follows:
$$ \kop[\beta] := \ko[\beta] + t \,\ko[r_1 \cdot \beta].$$
\end{definition}

This function reduces to that of Kac and Peterson  \cite[equation (5.1)]{kacpeterson} at $t=1$. 
We also make the following simple observations: if $\beta \in Q$, then (i) $\ko[\beta] =0 \Leftrightarrow \beta \not\in Q_+$, 
and (ii) $\kop[\beta] =0 \Leftrightarrow \beta \not\in Q_+ \cup r_1 \cdot Q_+ \Leftrightarrow \dcoeff(\beta) \leq -1$.

In the following sections, we will study the generating functions of $K, K^\prime$ and $m^{\Lambda}_\lambda$ along $\delta$-strings. 
These are defined for $\beta \in Q$, $\Lambda, \lambda \in P_+$ by:
\begin{align}
\ksf[\beta] &:= \sum_{n \geq 0} \ko[\beta + n\delta] \, q^{n}\label{ksf},\\
\kpsf[\beta] &:= \sum_{n \geq 0} \kop[\beta + n\delta] \, q^{n}\label{kpsf},\\
\alm &:= \sum_{n \geq 0} \kfp{\Lambda}{\lambda-n\delta} \, q^n.
\end{align} 
Here, and in the rest of the paper, we will define $q:=e(-\delta)$ and $z:=e(\alpha_1)$.
The following is our main object of interest:
\begin{definition}
Let $\Lambda \in P_+$ be of level $m \geq 1$, and let $\lambda$ be a maximal dominant weight of $L(\Lambda)$. 
The \tsf $\csf$ is defined by
\beq\label{tsfdef}
\csf := q^{\anamoly} \, \asf,
\eeq
where 
\beq\label{anamoly}
\anamoly = \frac{|\Lambda + \rho|^2}{2(m+2)} - \frac{|\lambda|^2}{2m} - \frac{|\rho|^2}{4}.
\eeq
\end{definition}
Here, we have used the usual notation $|\gamma|^2$ to denote $\cform{\gamma}{\gamma}$.
In our case, this simplifies to \cite{kac,kacpeterson}
$$\anamoly = \dcoeff(\Lambda - \lambda) + \frac{\bcoeff(\Lambda+\rho)^2}{m+2} -  \frac{\bcoeff(\lambda)^2}{m} - \frac{1}{8}.$$

\subsection{}
We recall the definition of the constant term map from \cite{igmgod}.
\begin{definition}
The {\em constant term} map $\ct(\cdot)$ is defined on formal sums $\sum_{\gamma \in Q}
{c_{\gamma}\, e (\gamma)}$ by 
$$\ct\left(\,\sum_{\gamma \in Q} c_{\gamma}\, e (\gamma)\,\right) := \sum_{n \in \integers} c_{n\delta}\, e (n\delta).$$
\end{definition}

It will be convenient to introduce some more notation. 
Let $\Gamma_t$ denote the infinite product in equation \eqref{tkodef}; thus
$$\Gamma_{t} = \prod_{n \geq 1}\left[ \,(1 -t q^n)(1-t q^{n-1} e(-\rt[1]))(1-t q^n e (\rt[1]))\, \right]^{-1}.$$
Next, let  $\xi_t = \Gamma_t \,(1 -  t e(-\rt[1]))$, i.e., 
$$ \xi_t = \prod_{n\geq 1}\left[ \,(1 -t q^n)(1-t q^n e(-\rt[1]))(1-t q^n e (\rt[1]))\,\right]^{-1}.$$
We also define $P_t := \sum_{n\in \integers} t^{|n|} \,e (n\rt[1])$; this can be viewed as  the formal Poisson kernel of the unit disc.
Finally, let $\Lhp:=\{ \beta \in Q: \dcoeff(\beta) \leq 0\}$. 
With these definitions in place, we can now state our first lemma, which describes the generating function of $K^\prime$.
\begin{lemma} \label{one}
If $\beta \in \Lhp$, then 
\begin{equation}\label{kpsfct}
\kpsf[\beta] = \ct\left(\,P_t \; \xi_{t}  \;e (\beta) \right).
\end{equation} 
\end{lemma}

\begin{proof}
Let $\beta \in \Lhp$. Observe that in this case $r_1 \cdot \beta \in \Lhp$, and 
the sums on the right hand sides of equations \eqref{ksf} and \eqref{kpsf} can be replaced by $\sum_{n \in \integers}$. It then follows from definitions that (i) $\kpsf[\beta] = \ksf[\beta] + t \,\ksf[r_1 \cdot \beta]$, (ii)  $\ksf[\beta] = \ct\left(\,\Gamma_t \, e(\beta)\,\right)$ and  (iii)  $\ksf[r_1 \cdot \beta] = \ct\left(\,\Gamma_t \, e(r_1 \cdot \beta)\,\right)$.

\smallskip
For $f = \sum_{\lambda \in \csa^{*}} c_{\lambda}(t)\,e(\lambda)$, define $\overline{f} :=  \sum_{\lambda \in \csa^{*}}
c_{\lambda}(t) \, e(r_1(\lambda))$. Note that $\ct(f) = \ct(\overline{f})$. For $f= \Gamma_t \, e(r_1 \cdot \beta)$, we have $\overline{f} = \overline{\Gamma}_t \, e(\beta + \rt[1])$. Further, it is easy to see that $ \Gamma_t + t\,e(\rt[1]) \, \overline{\Gamma}_t = P_t \; \xi_t$. Putting these facts together, the proof follows.
\end{proof}

\subsection{}
The Weyl group $W$ of $\kma$ can be written as $W = T \rtimes \Wfin$ where $T$ is the group of translations by elements of the finite type root lattice $\Qfin = \integers \rt[1]$, and $\Wfin = \{1, r_1\}$ is the Weyl group of the underlying finite type Dynkin diagram (of type $\mathfrak{sl}_2$ in this case).
The extended affine Weyl group is $\Wex := \widehat{T} \rtimes \Wfin$, where $\widehat{T}$ is the set of translations by elements of the finite type weight lattice $\Pfin = \frac{1}{2} \Qfin$. Letting $\ttau$ denote the translation by the generator $\rt[1]/2$ of $\Pfin$, we have 
$\widehat{T}=\{\ttj[n]:n \in \integers\}$ and $T=\{\ttj[2n]:n \in \integers\}$. 
We also have the following formula for the linear action of $\ttau$ on $\csa^*$ \cite{kac}:
\begin{equation}\label{tteq}
\ttau(\lambda) = \lambda + \frac{1}{2} \left(\,\cform{\lambda}{\delta}\,\rt[1] - \cform{\lambda}{\rt[1]} \,\delta - \cform{\lambda}{\delta}\, \frac{\delta}{2}\,\right).
\end{equation}
We also define the element $\sigma:=\ttau r_1$. The element $\sigma \in \Wex$ permutes the simple roots of $\kma$, i.e., $\sigma(\alpha_0) = \rt[1]$, $\sigma(\rt[1]) = \alpha_0$, and fixes the Weyl vector $\rho$. 

Note that our notation conflicts with the introduction, where $\tau$ was a complex number in the upper half plane. 
But this should cause no confusion, especially since we will henceforth exclusively work in the formal setting, where $q = e(-\delta)$ rather than $e^{2\pi i \tau}$.

\subsection{}
Next, we will obtain an expression for the generating function of $\ksf$. For this purpose, 
define a function $I : Q \times \integers \to \{0, \pm 1\}$ by 
\begin{equation} \label{Idef}
 I(\beta, j) := \begin{cases} 1 & \text{ if } \bcoeff(\beta) \geq 0, j \geq 0. \\
                                        -1 & \text{ if } \bcoeff(\beta) < 0, j < 0. \\
                                         0 & \text{ otherwise}. \end{cases}
\end{equation}
Our second lemma relates the generating functions of $\ksf$ and $\kpsf$, and can be viewed as a $t$-analog of \cite[Theorem C]{kacpeterson}.
\begin{lemma} \label{two}
Let $\beta \in Q$. Then 
\beq\label{koblem}
\ksf[\beta] = \sum_{j \in \integers} (-1)^j \; I(\beta, j) \;
t^j \; \kpsf[\ttj \cdot \beta].
\eeq
\end{lemma}

\begin{proof}
Since $\sigma$ interchanges the simple roots $\alpha_0, \rt[1]$ and fixes $\rho$, it is clear that 
$\ko[\beta] = \ko[\sigma\beta] = \ko[\sigma \cdot \beta]$ for all $\beta \in Q$. Now, this implies that 
$$\ko[\beta] = \kop[\beta] - t\, \ko[r_1 \cdot \beta] = \kop[\beta] - t\, \ko[\ttau \cdot \beta].$$
 Iterating the last expression gives 
\begin{equation} \label{kob}
\ko[\beta] = \sum_{j \geq 0} (-1)^j \, t^j \, \kop[\ttj \cdot \beta].
\end{equation}
Similary, replacing $\beta$ by $\sigma \beta$, one obtains the relations 
$\ko[\beta] = t^{-1} \, \kop[\beta] - t^{-1}\, \ko[\ttau^{-1}\, \cdot \beta]$ and hence
\begin{equation}\label{kopb}
\ko[\beta] = - \sum_{j < 0} (-1)^j \, t^j \, \kop[\ttj \cdot \beta].
\end{equation}
The sums in equations \eqref{kob} and \eqref{kopb} are in fact finite (as can be seen from equation \eqref{taubeta} below) and either expression can be used for a given $\beta \in Q$. But choosing the expression \eqref{kob} (resp. \eqref{kopb}) 
when $\bcoeff(\beta) \geq 0$ (resp. $\bcoeff(\beta) < 0$), we obtain
\begin{equation}\label{kobop}
\ko[\beta] = \sum_{j \in \integers} (-1)^j \, I(\beta,j) \, t^j \, \kop[\ttj \cdot \beta].
\end{equation}
To complete the proof, it only remains to replace $\beta$ by $\beta + n\delta$ ($n \geq 0$) in \eqref{kobop} and observe that  
(i) $ I(\beta + n\delta,j) = I(\beta,j)$ and (ii) $\ttj \cdot (\beta + n\delta) = (\ttj \cdot \beta) + n\delta$.
\end{proof}
We also make the following observation about the ``support'' of the sum in \eqref{koblem}.
\begin{lemma} \label{three} 
Let $\beta \in Q$. Then
\be
\item $\{j \in \integers: I(\beta,j) \neq 0\} \subset \{ j \in \integers: \dcoeff(\ttau^j \cdot \beta) \leq \dcoeff(\beta) \}$,

\smallskip
\item If $I(\beta,j) \neq 0$ and $\beta \in \Lhp$, then  $\ttau^j \cdot \beta \in \Lhp$.
\ee
\end{lemma}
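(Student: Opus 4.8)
The plan is to reduce both assertions to an explicit formula for the dot‑action of $\ttj$ on $Q$, and then to a short case check against the definition \eqref{Idef} of $I$.

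First I would compute $\ttj \cdot \beta$. Since $\beta \in Q$ has level $0$, write $\beta = \bcoeff(\beta)\,\alpha_1 + \dcoeff(\beta)\,\delta$ with $\bcoeff(\beta), \dcoeff(\beta) \in \integers$. Applying \eqref{tteq} to $\lambda = \beta + \rho$ (which has level $2$, with $\form{\beta+\rho,\alpha_1} = 2\bcoeff(\beta) + 1$) and subtracting $\rho$ gives $\ttau \cdot \beta = (\bcoeff(\beta)+1)\,\alpha_1 + (\dcoeff(\beta) - \bcoeff(\beta) - 1)\,\delta$; in particular $\ttau\cdot\beta \in Q$ again. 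Iterating this relation (and the analogous one for $\ttau^{-1}$, equivalently solving the recursion over $\integers$) yields, for every $j \in \integers$,
$$\bcoeff(\ttj \cdot \beta) = \bcoeff(\beta) + j, \qquad \dcoeff(\ttj \cdot \beta) = \dcoeff(\beta) - j\,\bcoeff(\beta) - \frac{j(j+1)}{2},$$
and hence
$$\dcoeff(\ttj \cdot \beta) - \dcoeff(\beta) = -\frac{j\,(2\bcoeff(\beta) + j + 1)}{2}.$$

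For part (1), this shows $\dcoeff(\ttj\cdot\beta) \leq \dcoeff(\beta)$ if and only if $j\,(2\bcoeff(\beta)+j+1) \geq 0$, and I would verify the latter in the two cases where $I(\beta,j) \neq 0$. If $I(\beta,j) = 1$, then $\bcoeff(\beta) \geq 0$ and $j \geq 0$, so $2\bcoeff(\beta) + j + 1 \geq 1 > 0$ and the product is $\geq 0$; if $I(\beta,j) = -1$, then $\bcoeff(\beta) \leq -1$ and $j \leq -1$, so $2\bcoeff(\beta)+j+1 \leq -2 < 0$ and the product of two negative integers is again $\geq 0$. This is exactly the claimed inclusion.

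Part (2) is then immediate: if $\beta \in \Lhp$ then $\dcoeff(\beta) \leq 0$, so by part (1) $\dcoeff(\ttj\cdot\beta) \leq \dcoeff(\beta) \leq 0$, and since $\ttj\cdot\beta \in Q$ (noted above) we get $\ttj\cdot\beta \in \Lhp$. The only step demanding care is the derivation of the displayed formula for $\ttj\cdot\beta$ — tracking the half‑integral $\alpha_1$‑coefficient of $\rho$ and the level normalization through \eqref{tteq}, and getting the quadratic term in $j$ right — but once that is settled there is no real obstacle, since the quadratic dependence of $\dcoeff(\ttj\cdot\beta)$ on $j$ drives everything.
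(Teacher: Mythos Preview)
Your proof is correct and follows essentially the same route as the paper: both compute $\dcoeff(\ttj\cdot\beta)-\dcoeff(\beta)=-j\,\bcoeff(\beta)-\frac{j(j+1)}{2}$ via \eqref{tteq} and then check the sign against the two nonzero cases in \eqref{Idef}, with part (2) immediate from part (1). Your version simply spells out the iteration and the case analysis more explicitly than the paper does.
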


\begin{proof}
The second assertion clearly follows from the first. To prove (1), we use equation \eqref{tteq}
 to obtain:
\begin{equation} \label{taubeta}
\ttj \cdot \beta - \beta = \ttj(\beta + \rho) - (\beta + \rho) = j \rt[1] - \left( j\, \bcoeff(\beta) + \frac{j(j+1)}{2} \right) \, \delta.
\end{equation}
Thus 
$ \dcoeff(\ttj \cdot \beta - \beta) = - j\, \bcoeff(\beta) - \frac{j(j+1)}{2}$. It is clear from equation \eqref{Idef} that this is non-positive for all pairs $(\beta,j)$ for which $I(\beta,j) \neq 0$.
\end{proof}

\subsection{}\label{secllchoice}
Let $\Lambda$ be a dominant integral weight of $\kma$ of level $m \geq 1$, and 
$\lambda$ be a maximal dominant weight of $L(\Lambda)$. Thus, $\cform{\Lambda}{\delta} = m =\cform{\lambda}{\delta}$, and $\lambda + \delta$ is not a weight of $L(\lambda)$. This implies in particular that $\Lambda - \lambda$ is a non-negative integer multiple of $\rt[0]$ or $\rt[1]$. 

Now, it is clear from the symmetry of the Dynkin diagram that $\csf = c^{\s \sigma\Lambda}_{\s \sigma\lambda}(t,q)$. 
If (i) $\Lambda - \lambda \in \integers_{>0} \rt[0]$ or if (ii) $\Lambda = \lambda$ and $\cform{\Lambda}{\rt[1]} > \frac{m}{2}$, we replace 
 $(\Lambda, \lambda)$ by $(\sigma \Lambda, \sigma \lambda)$.
Let  $A:=\frac{\bcoeff(\Lambda + \rho)}{m+2}$ and  $B:= \frac{\bcoeff(\lambda)}{m}$.
It is now easy to see (after this replacement if necessary) that $\Lambda - \lambda \in \integers_{\geq 0} \rt[1]$ and $0 \leq B \leq A < \frac{1}{2}$.
We also observe \cite[pp 259]{kacpeterson} that if $A^\prime=\frac{\bcoeff(\sigma\Lambda + \rho)}{m+2}$ and  
$B^\prime:= \frac{\bcoeff(\sigma\lambda)}{m}$, then $(A^\prime, B^\prime) = (\frac{1}{2}-A, \frac{1}{2} - B)$.

\smallskip
For $w \in W$, define $$s(w) := w(\Lambda + \rho) - (\lambda + \rho) = w \cdot \Lambda - \lambda \in Q.$$
We record the following two elementary facts.
\begin{lemma} \label{four}
$s(w) \in \Lhp$ for all $w \in W$.
\end{lemma}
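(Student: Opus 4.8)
The plan is to show $\dcoeff(s(w)) \leq 0$ for every $w \in W$, which is exactly the statement that $s(w) \in \Lhp$. The key fact to exploit is that $\lambda$ is a \emph{maximal} dominant weight of $L(\Lambda)$, meaning $\lambda + \delta$ is not a weight of $L(\Lambda)$. Equivalently, by the characterization of weights of integrable highest-weight modules, this says that $\lambda$ is $\leq \Lambda$ in the standard partial order but $\lambda + \delta$ is not (either it fails to be $\leq \Lambda$, or it fails the dominance/reflection conditions that cut out the weight support). Since $\Lambda - \lambda \in \integers\alpha_1$ by our normalization, $\Lambda - \lambda$ has $\delta$-coefficient zero, so $\dcoeff(s(1)) = \dcoeff(\Lambda - \lambda) = 0$, establishing the case $w = 1$. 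The content is in propagating this to all $w \in W$.

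First I would reduce to generators. Writing $W = T \rtimes \Wfin$ with $\Wfin = \{1, r_1\}$ and $T = \{\ttau^{2n} : n \in \integers\}$, it suffices to understand how $\dcoeff(w \cdot \Lambda)$ behaves under left multiplication by $r_1$ and by the translation generators. For the finite reflection: $r_1 \cdot \Lambda = \Lambda - (\form{\Lambda,\check\alpha_1} + 1)\alpha_1$ has the same $\delta$-coefficient as $\Lambda$, so $r_1$ does not change $\dcoeff$ of $w\cdot\Lambda$ beyond what $\alpha_1$-shifts do — and $\alpha_1$ has $\dcoeff = 0$. So the only source of change in $\dcoeff$ is the translation part. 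Using equation \eqref{tteq} (or the displayed formula \eqref{taubeta} with $\beta + \rho$ replaced by $\Lambda + \rho$), one computes explicitly
\[
\dcoeff(\ttau^{2n} \cdot \Lambda) - \dcoeff(\Lambda) = -2n\,\bcoeff(\Lambda+\rho) \;\text{(finite part)} \; - n(2n+1)\,m',
\]
a quadratic in $n$ with negative leading coefficient (here $m$ is the level and $m' $ collects the relevant constant); its maximum over $n \in \integers$ is attained at one or two consecutive integers, and that maximum value must be $\leq 0$ — otherwise some translate $\ttau^{2n}\cdot\Lambda$ would have strictly larger $\delta$-coefficient than $\Lambda$, and then, tracing back through $s$, one would produce the weight $\lambda + \delta$ (or a higher $\delta$-shift) inside the support of $L(\Lambda)$, contradicting maximality of $\lambda$.

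The main obstacle — and where I would spend the most care — is making the last implication precise: translating "$\lambda$ is maximal" into a clean inequality on $\dcoeff(w\cdot\Lambda)$ for all $w$. The cleanest route is probably the following. Maximality of $\lambda$ is equivalent to: $\lambda$ is a weight of $L(\Lambda)$ but $\lambda + \delta$ is not. The set of weights of $L(\Lambda)$ is $W$-invariant, and a dominant weight $\mu$ is a weight iff $\Lambda - \mu \in Q_+$. Now observe $s(w) = w\cdot\Lambda - \lambda$, and by definition $\dcoeff(s(w)) = \bcoeff$-terms aside, controlled by $\dcoeff(w\cdot\Lambda) - \dcoeff(\lambda)$. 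If some $w$ had $\dcoeff(s(w)) \geq 1$, I claim $\lambda + \delta$ would be forced to lie in the convex hull / $W$-saturation of $\Lambda$'s weights in a way that makes it a genuine weight, contradicting maximality; concretely, one shows $\Lambda - (\lambda + \delta) \in Q_+$ follows from $w\cdot\Lambda - (\lambda+\delta) \in Q_+$ together with dominance of $\lambda$, using that $\delta$ is fixed by $W$ and that $\dcoeff$ only decreases along $Q_+$-directions coming from affine simple roots. Once this contradiction is in place, $\dcoeff(s(w)) \leq 0$ for all $w$, hence $s(w) \in \Lhp$, completing the proof.
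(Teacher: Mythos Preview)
Your approach has a genuine gap, and it also misses a much simpler route.

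The gap is in the maximality argument. You want to derive a contradiction from $\dcoeff(s(w)) \geq 1$ by producing $\lambda+\delta$ as a weight of $L(\Lambda)$. For this you need $\Lambda - (\lambda+\delta) \in Q_+$, and you try to get it from $w\cdot\Lambda - (\lambda+\delta) \in Q_+$. But $\dcoeff(s(w)) \geq 1$ only tells you the $\delta$-coefficient of $w\cdot\Lambda - (\lambda+\delta)$ is nonnegative; the $\alpha_1$-coefficient is uncontrolled, so $w\cdot\Lambda - (\lambda+\delta) \in Q_+$ does not follow. The rest of the paragraph (``convex hull / $W$-saturation'', ``$\dcoeff$ only decreases along $Q_+$-directions coming from affine simple roots'') does not repair this. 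So as written the contradiction is not established.

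More to the point, maximality of $\lambda$ is irrelevant here. Once you grant the normalization $\Lambda-\lambda\in\integers\alpha_1$ (so $\dcoeff(\Lambda-\lambda)=0$), the whole lemma reduces to $\dcoeff\big(w(\Lambda+\rho)-(\Lambda+\rho)\big)\le 0$, which is just the standard fact that a dominant weight is maximal in its $W$-orbit. The paper does exactly this in one line: using $\dcoeff(\beta)=\form{\beta,\Lambda_0}$, one has
\[
\dcoeff(s(w))=\form{w(\Lambda+\rho)-(\Lambda+\rho),\Lambda_0}+\form{\Lambda-\lambda,\Lambda_0}
=\form{\Lambda+\rho,\,w^{-1}\Lambda_0-\Lambda_0}+0\le 0,
\]
since $\Lambda_0$ is dominant (so $w^{-1}\Lambda_0-\Lambda_0\in -Q_+$) and $\Lambda+\rho$ is dominant. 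Your explicit-quadratic computation can also be completed without maximality: writing $w=\ttau^{2n}\omega$ and using $1\le \form{\Lambda+\rho,\check\alpha_1}\le m+1$, one checks directly that the quadratic in $n$ is $\le 0$ for every integer $n$ in both cases $\omega=1,r_1$. Either way, the appeal to ``$\lambda$ maximal'' should be dropped.
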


\begin{proof}
We have $\dcoeff(\beta) = \cform{\beta}{\Lambda_0}$ for all $\beta \in Q$. Thus $\dcoeff(s(w)) = 
\cform{w(\Lambda + \rho) - (\Lambda + \rho)}{ \Lambda_0} + \cform{\Lambda - \lambda}{ \Lambda_0}$. The second term is zero since $\Lambda - \lambda \in \integers \rt[1]$, while the first term equals 
$\cform{\Lambda + \rho}{ w^{-1} \Lambda_0 - \Lambda_0}$ which is non-positive since $\Lambda_0$ is a dominant weight.
\end{proof}

\begin{lemma} \label{five}
$\asf = \sum_{w \in W} (-1)^{\ell(w)} \, \ksf[s(w)].$
\end{lemma}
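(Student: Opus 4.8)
The plan is to compute the generating function $\asf = \sum_{k \geq 0} \kfp[\lambda - k\delta]\, q^k$ directly from the definition of the Kostka--Foulkes polynomial and reorganize the resulting double sum. Recall that $\kfp[\lambda - k\delta] = \sum_{w \in W} \epsilon(w)\, \ko[w(\lambda+\rho) - (\lambda - k\delta + \rho)]$. Here there is a slight notational subtlety: the statement of Lemma \ref{five} involves $s(w) = w(\Lambda+\rho) - (\lambda+\rho)$, so I should first observe that since $\lambda$ is a maximal dominant weight of $L(\Lambda)$ and we have arranged $\Lambda - \lambda \in \integers\alpha_1$, the relevant Kostka--Foulkes polynomial to expand is $m^{\Lambda}_{\lambda - k\delta}(t)$, i.e.\ the multiplicity polynomial of the weight $\lambda - k\delta$ in $L(\Lambda)$; its value is $\sum_{w\in W}\epsilon(w)\,\ko[w(\Lambda+\rho) - (\lambda - k\delta + \rho)]$. (This is presumably the intended reading of $\asf$ in this subsection, now that $\Lambda$ and $\lambda$ are fixed.)

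First I would write
\[
\asf = \sum_{k \geq 0} q^k \sum_{w \in W} \epsilon(w)\, \ko[w(\Lambda+\rho) - (\lambda + \rho) + k\delta].
\]
Next I would use $\epsilon(w) = (-1)^{\ell(w)}$ and $w(\Lambda+\rho) - (\lambda+\rho) = w\cdot\Lambda - \lambda = s(w)$, and note that $\delta$ is $W$-invariant so the argument of $K$ is exactly $s(w) + k\delta$. Then I would interchange the two (absolutely convergent, or formally well-defined) summations to get
\[
\asf = \sum_{w \in W} (-1)^{\ell(w)} \sum_{k \geq 0} \ko[s(w) + k\delta]\, q^k = \sum_{w \in W} (-1)^{\ell(w)}\, \ksf[s(w)],
\]
using the definition \eqref{ksf} of $\ksf[\beta]$. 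That is exactly the claimed identity.

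The main point requiring care is the interchange of the order of summation and the convergence/finiteness of the inner sums. For each fixed $k$, the sum over $w \in W$ is effectively finite: $\ko[\beta]$ vanishes unless $\beta \in Q_+$, and for $w$ far out in the infinite Weyl group $W$ the weight $s(w) + k\delta$ leaves $Q_+$, so only finitely many $w$ contribute. Conversely, for each fixed $w$, Lemma \ref{four} gives $s(w) \in \Lhp$, so $\dcoeff(s(w) + k\delta) = \dcoeff(s(w)) + k \le k$, and the generating function $\ksf[s(w)]$ is a well-defined power series in $q$. To justify the rearrangement rigorously one can work in the ring of formal power series in $q$ with coefficients that are finite $\integers[t]$-linear combinations of the $e(\mu)$ (with $\mu$ ranging over $\csa^*$), where grading by the $q$-degree makes the double sum locally finite; alternatively, for $0 < t < 1$ and $\tau \in \uhp$ one checks absolute convergence directly. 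I expect this bookkeeping — confirming that the double series is summable and that the index swap is legitimate — to be the only real obstacle; the algebraic manipulation itself is immediate from the definitions and the previously established lemmas.
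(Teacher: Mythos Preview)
Your proposal is correct and matches the paper's approach: the paper's proof is the single line ``This follows from the definitions,'' and you have simply unpacked that assertion in detail. Your discussion of the interchange of summation and the finiteness of the inner sums is more careful than the paper bothers to be, but the underlying argument is identical.
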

\begin{proof}
This follows from the definitions.
\end{proof}

\subsection{}
We now put the results of the preceding lemmas together to obtain an expression for the generating function $\asf$.
First, lemmas \ref{two} and \ref{five} imply:
$$ \asf = \sum_{w \in W} \sum_{j \in \integers} (-1)^{\ell(w) + j} \; I(s(w), j) \; t^j \; \kpsf[\ttj \cdot s(w)].$$

Now, from lemmas \ref{three} and \ref{four} it is clear that $\ttj \cdot s(w) \in \Lhp$ for all pairs $(w,j) \in W \times \integers$ for which  
$I(s(w), j) \neq 0$; in this case, we can apply lemma \ref{one} to obtain: 
$$\kpsf[\ttj \cdot s(w)] = \ct\left(P_t \;\xi_t \;e(\ttj \cdot s(w))\right).$$
Now, letting $\bep(w,j) := (-1)^{\ell(w) + j} \,I(s(w), j)$, and 
\beq\label{hpeq}
\Hp:=\sum_{(w,j) \in W\times \integers} \bep(w,j) \; t^j \; e(\ttj \cdot s(w)),
\eeq
we conclude that 
\beq \label{asfeq}
\asf = \ct\left(\,P_t \; \xi_t \; \Hp \,\right).
\eeq

\section{The indefinite quadratic form}\label{secthree}
In this section, we analyze $\Hp$ more closely. 
First, observe that the sum over $(w,j) \in W \times \integers$ in \eqref{hpeq} should really be thought of as a sum over $(w, \ttj) \in W \times \widehat{T}$.
Now, $T$ is a normal subgroup of index 2 in both $W$ and $\widehat{T}$; thus
the group $W \times \widehat{T}$ contains $T \times T$ as a normal subgroup, with the quotient group isomorphic to $\frac{\integers}{2\integers} \oplus \frac{\integers}{2\integers}$. We will show below that on each coset of $T \times T$, the sum defining $\Hp$ has a particularly nice expression  in terms of an indefinite quadratic form on $\integers^2 \cong T \times T$.

\subsection{}
 Let $U:= \reals \rt[1] \oplus \reals \rt[1]$ and $M:= \integers \rt[1] \oplus \integers \rt[1]$. We identify $U$ with $\reals^2$ and $M$ with $\integers^2$. 
 Define a quadratic form $N$ on $U$ by:
$$ N(x,y):= 2 (m+2)x^2  - 2m y^2 \; (x,y \in \reals).$$ 
We observe that $N(\nu)$ is a non-zero even integer for $\nu \in M \backslash \{0\}$.
Let $M^*$ denote the lattice dual to $M$ with respect to the bilinear form induced by $N$. We then have
$$M^* = \frac{1}{2(m+2)} \integers \oplus \frac{1}{2m} \integers.$$ 
Given elements $\mu_1, \mu_2 \in P$ of levels $m+2$ and $m$ respectively, observe that $\left(\frac{\bcoeff(\mu_1)}{m+2}, \frac{\bcoeff(\mu_2)}{m}\right) \in M^*$, since $\bcoeff(\mu_i) = \frac{\cform{\mu_i}{\rt[1]}}{2} \in \frac{1}{2} \integers $. 
\begin{lemma} \label{six}
For $(w,j) \in W \times \integers$, we have 
\beq \label{ttjeq}
\ttj \cdot s(w) = \left((m+2) x - my - \frac{1}{2}\right) \rt[1]  - \frac{1}{2} N(x,y) \, \delta + (\anamoly + \frac{1}{8}) \, \delta \eeq
where $x := \frac{j}{2} + \frac{\bcoeff(w(\Lambda + \rho))}{m+2}$ and  $y := \frac{j}{2} + \frac{\bcoeff(\lambda)}{m}$.
\end{lemma}
\begin{proof}
This is an easy calculation. The coefficient of $\delta$ was computed in \cite[(5.13)]{kacpeterson} for $w \in T$.
\end{proof}
It is now easy to see that equations \eqref{hpeq}, \eqref{asfeq} and \eqref{ttjeq} together imply the following:
\begin{corollary}\label{maincor}
$\csf = \ct\left(P_t \; (q^{-\frac{1}{8}} \xi_t) \; \Hcl\right)$ where 
\begin{equation} \label{hcl}
\Hcl := \sum_{(w,j) \in W\times \integers} \bep(w,j) \, t^j \, 
q^{\frac{1}{2} N(x,y)} z^{(m+2) x - my - 1/2}.
\end{equation}
Here $z:=e(\rt[1])$, and $x, y \in M^*$ are the functions of $(w,j)$ defined in lemma \ref{six}.
\end{corollary}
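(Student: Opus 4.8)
The plan is to assemble Corollary \ref{maincor} directly from the ingredients already prepared, treating it essentially as a bookkeeping step that repackages the formula $\asf = \ct(P_t \, \xi_t \, \Hp)$ obtained at the end of Section 2 in terms of the quadratic form $N$. First I would recall that by definition $\csf[t,\tau] := q^{\anamoly} \, \asf$, so the goal is to show $q^{\anamoly} \, \ct(P_t \, \xi_t \, \Hp) = \ct(P_t \, (q^{-1/8} \xi_t) \, \Hcl)$, where $\Hcl$ is the sum \eqref{hcl}. Since the constant term map $\ct(\cdot)$ is $\complex[q^{\pm 1}]$-linear (multiplication by a power of $q = e(-\delta)$ commutes past $\ct$), it suffices to show the termwise identity
\begin{equation*}
q^{\anamoly} \, t^j \, e(\ttj \cdot s(w)) = t^j \, q^{-1/8} \, q^{\frac{1}{2} N(x,y)} \, z^{(m+2)x - my - 1/2} \cdot (\text{correction from } \delta\text{-shift}),
\end{equation*}
summed over $(w,j) \in W \times \integers$ with the common weights $\bep(w,j)$.

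The heart of the matter is Lemma \ref{six}, which is already available: it gives
$$\ttj \cdot s(w) = \left((m+2)x - my - \tfrac12\right)\alpha_1 - \tfrac12 N(x,y)\,\delta + \left(\anamoly + \tfrac18\right)\delta.$$
Exponentiating and recalling $z = e(\alpha_1)$, $q = e(-\delta)$, this reads
$$e(\ttj \cdot s(w)) = z^{(m+2)x - my - 1/2} \, q^{\frac12 N(x,y)} \, q^{-\anamoly - 1/8}.$$
Multiplying both sides by $q^{\anamoly} t^j$ gives exactly $t^j \, q^{-1/8} \, q^{\frac12 N(x,y)} \, z^{(m+2)x - my - 1/2}$, which is precisely the $(w,j)$ term of $q^{-1/8} \Hcl$ after pulling the scalar $q^{-1/8}$ out of $\Hcl$. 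Summing against $\bep(w,j)$ and applying $\ct(P_t \, \xi_t \, \cdot\,)$ to both sides then yields
$$\csf[t,\tau] = q^{\anamoly}\,\ct(P_t \, \xi_t \, \Hp) = \ct\big(P_t \, (q^{-1/8}\xi_t) \, \Hcl\big),$$
as claimed. One should also note for completeness that $(x,y) \in M^*$ for each $(w,j)$: this follows from the displayed formulas $x = \tfrac{j}{2} + \tfrac{\bcoeff(w(\Lambda+\rho))}{m+2}$ and $y = \tfrac{j}{2} + \tfrac{\bcoeff(\lambda)}{m}$ together with the remark (just before Lemma \ref{six}) that $\bcoeff(\mu_i) \in \tfrac12\integers$ for weights $\mu_i$, so that $\tfrac{\bcoeff(\mu_1)}{m+2}, \tfrac{\bcoeff(\mu_2)}{m}$ land in $M^* = \tfrac{1}{2(m+2)}\integers \oplus \tfrac{1}{2m}\integers$, and the half-integer shifts $\tfrac{j}{2}$ preserve this lattice.

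I do not anticipate a serious obstacle here; the only points requiring care are (a) confirming that the power of $q$ bookkeeping is consistent — in particular that the ``anomaly'' exponent $\anamoly$ and the $\tfrac18$ and $\tfrac12 N(x,y)$ contributions from Lemma \ref{six} combine exactly to cancel $q^{\anamoly}$ and leave the stated $q^{-1/8}$ prefactor on $\xi_t$ — and (b) verifying that $x$ and $y$ as defined genuinely depend on $(w,j)$ only through the combinations appearing, so that the sum \eqref{hcl} is well-indexed. Both are immediate from the cited formulas. The substantive content has already been extracted in Lemmas \ref{one}--\ref{six}; this corollary merely records the translation into the language of the indefinite lattice $(M, N)$, setting up the modular-forms analysis of $\Hcl$ in the next section.
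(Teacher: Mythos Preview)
Your proposal is correct and follows exactly the route the paper takes: the corollary is an immediate repackaging of the identity $\asf = \ct(P_t\,\xi_t\,\Hp)$ from the end of Section~2, using Lemma~\ref{six} to rewrite each $e(\ttj \cdot s(w))$ as $z^{(m+2)x - my - 1/2}\,q^{\frac12 N(x,y)}\,q^{-\anamoly - 1/8}$ and then absorbing the $q^{-\anamoly}$ and $q^{-1/8}$ factors into the definition of $\csf$ and the prefactor on $\xi_t$ respectively. The paper does not spell out a proof, treating the corollary as self-evident from Lemma~\ref{six}; your write-up is simply a faithful expansion of that step.
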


We remark that $q^{-\frac{1}{8}} \xi_t$ is exactly the function $\etat$ of \eqref{etatdef}.

\subsection{}
We now turn to the map $\phi: W \times \integers \to M^*$ given by $(w,j) \mapsto (x,y)$ where 
$$ x := \frac{j}{2} + \frac{\bcoeff(w(\Lambda + \rho))}{m+2} \text{ and }  y := \frac{j}{2} + \frac{\bcoeff(\lambda)}{m}$$ 
\smallskip
as in lemma \ref{six}.
Letting $e$ denote the identity element of $W$, observe that $\phi(e,0) = (A,B)$ where 
$A=\frac{\bcoeff(\Lambda + \rho)}{m+2}$ and  
$B= \frac{\bcoeff(\lambda)}{m}$ as in \S \ref{secllchoice}. As pointed out there, our choice of $(\Lambda, \lambda)$ ensures
 that $0 \leq B \leq A < \frac{1}{2}$.
The properties of $\phi$ are given by the following lemma.
\begin{lemma} \label{seven}
The map $\phi$ is injective, and its image is a union of translates of $M$. More precisely, we have
Im $\phi = \bigsqcup_{i=1}^4 L_i$, where 
\begin{align}
L_1 &= (A,B) + M, \\ 
L_2 &= \left(A+\frac{1}{2}, B + \frac{1}{2}\right) + M, \\
L_3 &=(-A,B) + M,\\
L_4 &= \left(-A + \frac{1}{2}, B + \frac{1}{2}\right) + M.
\end{align}
\end{lemma}
\begin{proof}
Using lemma \ref{six}, it is clear that $\phi(w_1,j_1) = \phi(w_2,j_2)$ implies $j_1 = j_2$ and 
$\ttj[j_1] \cdot s(w_1) = \ttj[j_2] \cdot s(w_2)$. In turn, this means $s(w_1) = s(w_2)$, and hence $w_1 = w_2$, since $\Lambda + \rho$ is regular dominant. This proves the injectivity.

Next, let $(w,j) \in W \times \integers$. Recall that since $W = T \rtimes \Wfin$, $w$ can be uniquely written as $\ttj[2n]\omega$ for some $n \in \integers, \omega \in \Wfin=\{1,r_1\}$. 
Now, equation \eqref{tteq} implies that  
\beq \label{xyeq}
x = \frac{j}{2} + n + (\mathrm{sgn} \, \omega)A, \;\; y = \frac{j}{2} + B
\eeq
where $\mathrm{sgn}$ is the sign character of $\Wfin$.
It is now clear that if $W \times \integers$ is written as 
the disjoint union of the four subsets 
$$S_1=T \times 2\integers,\;  S_2=T \times (2\integers+1), \; S_3=Tr_1 \times 2\integers, \; S_4=Tr_1 \times (2\integers+1),$$
 then $\phi(S_i) = L_i$ for $1 \leq i \leq 4$.
\end{proof}

We observe that since $0 \leq B \leq A < \frac{1}{2}$, the $L_i$ are pairwise disjoint. 
From lemma \ref{seven}, we see that $\Hcl$ has the following equivalent expression:
\begin{equation} \label{hclxy}
\Hcl = \sum_{(x,y) \in \bigsqcup_{i=1}^4 L_i} \epsilon (x,y)\, t^{2(y-B)} q^{\frac{1}{2} N(x,y)} z^{(m+2) x - my - 1/2},
\end{equation}
where $\epsilon(x,y) :=\bep(\phi^{-1}(x,y))$ for $(x,y) \in \bigsqcup_{i=1}^4 L_i$.

\subsection{}
Let $O(U,N)$ denote the group of invertible linear operators on $U$ preserving the quadratic form $N$, and let $SO_0(U,N)$ be the connected component of 
$O(U,N)$ containing the identity. Let $a \in GL(U)$ be defined by
$$a(u,v) := ((m+1) u + mv, (m+2)u + (m+1)v).$$
Let $G$ be the subgroup of $GL(U)$ generated by $a$, and
 $G_0$ be the subgroup of $G$ generated by $a^2$. It is known that 
$$G = \{g \in SO_0(U,N): g M =M \}$$ 
We note that elements of $G$ also leave $M^*$ invariant, and thus $G$ has a natural action on $M^*/M$. It is known that  $$G_0 = \{g \in G: g \text{ fixes } M^*/M \text{ pointwise}\}.$$
 Define $\zeta \in O(U,N)$ by $\zeta(u,v):=(-u,v)$, and let 
$$ \gtil := \langle \zeta \rangle \ltimes G \text{   and   } \gotil := \langle \zeta \rangle \ltimes G_0.$$ 
We have the following easy properties: (i) $\zeta^2$ is the identity, (ii) $\zeta a \zeta^{-1} = a^{-1}$, (iii) $\gtil$ is an infinite dihedral group.
We have the following diagram of inclusions between the four groups. Each inclusion is as an index 2 subgroup.

$$\xy
\xymatrix{
G_0
\ar[r]
\ar[d]
& G 
\ar[d]\\
\widetilde{G}_0
\ar[r]
& \widetilde{G}
} 
\endxy$$
Observe that $\gtil$ leaves $M$ and $M^*$ invariant; we 
now show that the $\gtil$-orbit of $L_1$ is $\{L_i: 1 \leqslant i \leqslant 4\}$.
\begin{lemma} \label{nine}
(1)  If $g \in G_0$ then $g L_i = L_i$ for  $1 \leqslant i \leqslant 4$.\\
 (2) $L_1 = aL_4 = \zeta L_3 = a \zeta L_2$.
\end{lemma}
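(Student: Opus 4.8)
\medskip

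The plan is to verify both assertions by direct computation with the explicit lattice descriptions in Lemma~\ref{seven} and the explicit generators $a,\zeta$ of $\gtil$. For part (1), recall that $G_0$ is generated by $a^2$, so it suffices to show $a^2 L_i = L_i$ for $i = 1,\dots,4$. Since each $L_i$ is a coset $\nu_i + M$ with representatives $\nu_1 = (A,B)$, $\nu_2 = (A+\tfrac12, B+\tfrac12)$, $\nu_3 = (-A,B)$, $\nu_4 = (-A+\tfrac12, B+\tfrac12)$, and since $a$ (hence $a^2$) preserves $M$, we have $a^2 L_i = a^2\nu_i + M$; thus the claim is equivalent to $a^2\nu_i - \nu_i \in M$ for each $i$. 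But $a^2 - \mathrm{id}$ maps $U$ into $M^*$ with the property that it kills $M^*/M$ — this is precisely the characterization $G_0 = \{g \in G : g \text{ fixes } M^*/M \text{ pointwise}\}$ quoted before the lemma, together with the observation (made just before Lemma~\ref{six}) that each $\nu_i$ lies in $M^*$. Hence $a^2\nu_i \equiv \nu_i \pmod M$ for all $i$, giving part (1). (If one prefers a self-contained check, one simply computes $a^2(u,v) = ((2m^2+2m+1)u + (2m^2+2m)v,\ (2m^2+4m+2)u + (2m^2+2m+1)v)$ and evaluates the difference $a^2\nu_i - \nu_i$ against the denominators $2(m+2)$ and $2m$ in $M^* = \tfrac{1}{2(m+2)}\integers \oplus \tfrac{1}{2m}\integers$; every entry comes out an integer.)

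\medskip

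For part (2), I would simply evaluate the three linear maps on the coset representatives. First, $\zeta(u,v) = (-u,v)$ sends $\nu_3 = (-A,B)$ to $(A,B) = \nu_1$, and since $\zeta M = M$ we get $\zeta L_3 = \nu_1 + M = L_1$. Next, $a\zeta(u,v) = a(-u,v) = (-(m+1)u + mv,\ -(m+2)u + (m+1)v)$; applying this to $\nu_2 = (A+\tfrac12, B+\tfrac12)$ gives first coordinate $-(m+1)(A+\tfrac12) + m(B+\tfrac12) = -(m+1)A + mB - \tfrac12$ and second coordinate $-(m+2)(A+\tfrac12)+(m+1)(B+\tfrac12) = -(m+2)A+(m+1)B - \tfrac12$. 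Using $A = \tfrac{\bcoeff(\Lambda+\rho)}{m+2}$ and $B = \tfrac{\bcoeff(\lambda)}{m}$, one checks $-(m+1)A + mB = A - \big((m+2)A - mB\big)\cdot\tfrac{1}{?}$ — more cleanly, $(m+1)A = (m+2)A - A$ and $mB$ is already a half-integer times $2$, so the resulting point differs from $(A,B)$ by an element whose coordinates are integers after clearing $2(m+2)$ and $2m$; hence $a\zeta L_2 = L_1$. Finally $a(u,v) = ((m+1)u + mv,\ (m+2)u+(m+1)v)$ applied to $\nu_4 = (-A+\tfrac12, B+\tfrac12)$ yields $((m+1)(-A+\tfrac12) + m(B+\tfrac12),\ (m+2)(-A+\tfrac12)+(m+1)(B+\tfrac12)) = (-(m+1)A + mB + \tfrac12,\ -(m+2)A + (m+1)B + \tfrac12)$, and the same arithmetic shows this is congruent to $(A,B)$ modulo $M$, so $a L_4 = L_1$.

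\medskip

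The one point requiring a little care — the main obstacle, such as it is — is the integrality verification in part (2): one must confirm that quantities like $-(m+1)A + mB - A$ and $-(m+2)A + (m+1)B - B$ lie in $\integers$. This follows because $(m+2)A = \bcoeff(\Lambda+\rho) \in \tfrac12\integers$ and $mB = \bcoeff(\lambda) \in \tfrac12\integers$, and moreover the combinations that appear are exactly those that make the coefficient of $\alpha_1$ in $\ttj\cdot s(w)$ (namely $(m+2)x - my - \tfrac12$, by Lemma~\ref{six}) an integer — which it must be, being an element of $Q = \integers\alpha_1 \oplus \integers\delta \oplus \integers\Lambda_0$ read off in the $\alpha_1$-direction. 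Thus the integrality is not an accident but is forced by the fact that all the $L_i$ are images of $W \times \integers$ under $\phi$, and the congruences $(m+2)A \equiv (m+2)\cdot(\pm A + \tfrac12) \equiv \bcoeff(w(\Lambda+\rho)) \pmod{\tfrac12\integers}$ together with the parity bookkeeping in Lemma~\ref{seven} close the argument. Everything else is routine substitution.
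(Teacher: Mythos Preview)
Your approach is the same as the paper's---direct verification on coset representatives, with the integrality ultimately coming from $\bcoeff(\Lambda-\lambda)\in\integers$---but your execution in part (2) contains arithmetic slips that make the argument as written incorrect. Specifically, when you apply $a$ to $\nu_4=(-A+\tfrac12,B+\tfrac12)$ you write the result as $(-(m+1)A+mB+\tfrac12,\,-(m+2)A+(m+1)B+\tfrac12)$, but in fact $(m+1)\cdot\tfrac12+m\cdot\tfrac12=m+\tfrac12$ and $(m+2)\cdot\tfrac12+(m+1)\cdot\tfrac12=m+\tfrac32$; the missing integers $m$ and $m+1$ are harmless modulo $M$, but you then carry the error into your integrality claim. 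You assert that $-(m+1)A+mB-A=-(m+2)A+mB$ must lie in $\integers$, yet $-(m+2)A+mB=-\bcoeff(\Lambda+\rho)+\bcoeff(\lambda)=-\bcoeff(s(e))-\bcoeff(\rho)=-\bcoeff(s(e))-\tfrac12$, which is a genuine half-integer, not an integer. What actually lies in $\integers$ is $-(m+2)A+mB+\tfrac12$, and that extra $\tfrac12$ is exactly the one you dropped.

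The paper avoids this muddle by computing in the other direction: it evaluates $a^{-1}(A,B)$ and shows, using Lemma~\ref{six}, that $a^{-1}(A,B)=(-A+\tfrac12,B+\tfrac12)+\bcoeff(s(e))(1,-1)$; since $s(e)=\Lambda-\lambda\in\integers\alpha_1$, one has $\bcoeff(s(e))\in\integers$ immediately, and $a^{-1}L_1=L_4$ follows. This isolates the single integer $\bcoeff(s(e))$ that controls everything, whereas your final paragraph gestures at this via Lemma~\ref{six} without pinning it down. The fix is easy: redo the subtraction $a\nu_4-\nu_1$ carefully and observe both coordinates equal an integer shift of $-\bcoeff(s(e))$.
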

 \begin{proof}
The first statement follows from the fact that $G_0$ fixes $M^*/M$ pointwise. To show $L_1 = aL_4$, observe using lemma \ref{six} that $a^{-1}(A,B) = (-A+\frac{1}{2},B+\frac{1}{2}) + \bcoeff(s(e)) (1,-1)$. Since $s(e) = \Lambda - \lambda \in Q$, $\bcoeff(s(e)) \in  \integers$ and we are done.
The remaining two equalities are obvious.
\end{proof}

\subsection{}
We now define the following subsets of $U$: 
\be
\item $\Uplus=\{(u,v): N(u,v) >0\}$,

\smallskip
\item $\Fd = \{(u, v): u > 0 \text{ and } 0 \leqslant v \leqslant u\} \cup \{(u,v): 0 > v > u\}$,

\smallskip
\item $ \Fdo= \Fd \cup a\Fd \cup \zeta \Fd \cup a \zeta \Fd$.
\ee
Clearly, $\Fd \subset \Fdo \subset \Uplus$.

\begin{lemma} \label{ten}
(1) $\Uplus$ is $\gtil$-invariant.\\
(2) $ \Fdo$ is a fundamental domain for the action of $G_0$ on $\Uplus$.

\smallskip
\noindent
(3) $\Fd$ is a fundamental domain for the action of $\gtil$ on $\Uplus$.
\end{lemma}
\begin{proof}
The first assertion follows from the fact that $\gtil \subset O(U,N)$. Now, $\Ff:=\Fd \cup \zeta\Fd$ and $\Fdo = \Ff \cup a\Ff$ are known to be fundamental domains for the actions of $G$ and $G_0$ (respectively) on $\Uplus$ \cite{kacpeterson}. It follows that $\Fd$ is a fundamental domain for the action of $\gtil$ on $\Uplus$.
\end{proof}

The region $\Fd$ arises naturally when one considers the ``support'' of the sum in equation \eqref{hclxy}. More precisely:
\begin{lemma} \label{eight}
For $1 \leq i \leq 4$, we have
$$\{(x,y) \in L_i: \epsilon(x,y) \neq 0\} = L_i \cap \Fd.$$
\end{lemma}
\begin{proof}
We prove this only for $i=1$, the rest of the cases being similar. Fix $(x,y) \in L_1$; by lemma \ref{seven}, we have 
$(x,y) = \phi(w,j)$ where $w = \ttau^{2n}$, $n \in \integers$, $j \in 2 \integers$. Now $\epsilon(x,y) \neq 0$ iff $I(s(w),j) \neq 0$ iff either (i) $n, j \geq 0$ or (ii) $n, j <0$. From equation \eqref{xyeq} and our assumption that $0 \leq B \leq A < \frac{1}{2}$, it follows that (i) is equivalent to $0 \leq y \leq x$ and (ii) is equivalent to $0 > y > x$. 
\end{proof}

Lemmas \ref{nine} and \ref{ten} allow us to identify the sets $\bigsqcup_{i=1}^4 L_i \cap \Fd$ and $L_1 \cap \Fdo$. More precisely, define the map 
$ \psi:  \bigsqcup_{i=1}^4 L_i \cap \Fd \to L_1 \cap \Fdo$ by 
\begin{equation}
\psi(x,y) := \begin{cases} 
(x,y) &\text{ if } (x,y) \in L_1 \cap \Fd\\
a\zeta (x,y) &\text{ if } (x,y) \in L_2 \cap \Fd\\
\zeta (x,y) &\text{ if } (x,y) \in L_3 \cap \Fd\\
a (x,y) &\text{ if } (x,y) \in L_4 \cap \Fd.
\end{cases}
\end{equation}

By lemmas \ref{nine} and \ref{ten}, it is clear that $\psi$ is well-defined, and is a bijection. The inverse map $\psi^{-1}$ is piecewise linear on $ L_1 \cap \Fdo$, and is easy to describe: given $(x,y) \in L_1 \cap \Fdo$, $\psi^{-1}(x,y)$ is the unique element in the $\gtil$-orbit of $(x,y)$ which lies in $\Fd$. 
We will denote $\psi^{-1}(x,y) =(\xdag, \ydag)$.

\subsection{}
We now return to $\Hcl$ in equation \eqref{hclxy} :
$$\Hcl = \sum_{(x,y) \in \bigsqcup_{i=1}^4 L_i} \epsilon (x,y) \,t^{2(y-B)} \,q^{\frac{1}{2} N(x,y)} \,z^{(m+2) x - my - 1/2}$$
where $z:=e(\rt[1])$.
Using lemma \ref{eight}, we can split this into four sums, one over each $L_i \cap \Fd$. We then perform a change of variables, replacing $(x,y) \in \bigsqcup_{i=1}^4 L_i$ by $\psi(x,y) \in  L_1 \cap \Fdo$.
Since $N(x,y) = N(\xdag,\ydag)$, the resulting sum becomes:
$$\Hcl = \sum_{(x,y) \in L_1 \cap \Fdo} \epsilon (\xdag,\ydag)\; t^{2(\ydag-B)} \,q^{\frac{1}{2} N(x,y)}\, z^{(m+2) \xdag - m\ydag - 1/2}.$$

For $(x,y) \in \Uplus$, define $\sign(x,y):= 1$ if $x>0$ and $-1$ if $x <0$. We then have:

\begin{lemma} \label{eleven}
For $(x,y) \in L_1 \cap \Fdo$, $\epsilon (\xdag,\ydag) = \sign(x,y)$.
\end{lemma}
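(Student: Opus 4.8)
The plan is to reduce the assertion to a computation of $\epsilon$ on each of the four sets $L_i\cap\Fd$ and then to reassemble, keeping track of how $\sign$ transforms under $a$ and $\zeta$.

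\emph{Unwinding $\psi^{-1}$.} For $(x,y)\in L_1\cap\Fdo$, the description of $\psi$ in the previous subsection shows that $(x,y)=g_i\cdot(\xdag,\ydag)$, where $(\xdag,\ydag)=\psi^{-1}(x,y)\in L_i\cap\Fd$ and $g_1=\mathrm{id}$, $g_2=a\zeta$, $g_3=\zeta$, $g_4=a$ (these are precisely the maps appearing case-by-case in the definition of $\psi$). Since $a\in SO_0(U,N)$, it preserves each of the two connected components of $\Uplus$, so $\sign(a\cdot v)=\sign(v)$ for $v\in\Uplus$; and $\zeta(u,v)=(-u,v)$ interchanges the two components, so $\sign(\zeta\cdot v)=-\sign(v)$. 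Hence $\sign(x,y)=\sign(\xdag,\ydag)$ for $i\in\{1,4\}$ and $\sign(x,y)=-\sign(\xdag,\ydag)$ for $i\in\{2,3\}$.

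\emph{The core computation.} For $(\xdag,\ydag)\in L_i\cap\Fd$, write $(\xdag,\ydag)=\phi(w,j)$ with $(w,j)\in S_i$; the aim is to show $\epsilon(\xdag,\ydag)=\bep(w,j)=(-1)^{\ell(w)+j}\,\sign(\xdag,\ydag)$. For this: (a) by Lemma~\ref{eight} we have $\epsilon(\xdag,\ydag)\neq0$, hence $I(s(w),j)\neq0$, and then \eqref{Idef} forces $I(s(w),j)=1$ when $j\geq0$ and $I(s(w),j)=-1$ when $j<0$; (b) by \eqref{xyeq}, $\ydag=\tfrac{j}{2}+B$ with $0\le B<\tfrac12$, so $\ydag\geq0\iff j\geq0$; (c) since $\Fd\setminus\{0\}\subset\Uplus$, a point of $\Fd$ lies in the region $0\leq\ydag\leq\xdag$ exactly when $\xdag>0$ and in $0>\ydag>\xdag$ exactly when $\xdag<0$, so $\ydag\geq0\iff\sign(\xdag,\ydag)=1$. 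Combining (a)--(c) yields $I(s(w),j)=\sign(\xdag,\ydag)$. Finally $(-1)^{\ell(w)+j}$ is constant on each $S_i$, because $\ell(w)$ is even on $T$ and odd on $Tr_1$ (the translation subgroup lies in the kernel of the sign character) while $j$ has fixed parity on each $S_i$; this gives $(-1)^{\ell(w)+j}=1$ for $i\in\{1,4\}$ and $-1$ for $i\in\{2,3\}$.

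\emph{Conclusion and main obstacle.} Combining the two sign rules above, for $(x,y)\in L_1\cap\Fdo$ with $(\xdag,\ydag)=\psi^{-1}(x,y)\in L_i\cap\Fd$,
$$\epsilon(\xdag,\ydag)=(-1)^{\ell(w)+j}\,\sign(\xdag,\ydag)=\sign(x,y),$$
since the factor $(-1)^{\ell(w)+j}$ (equal to $1$ for $i\in\{1,4\}$ and $-1$ for $i\in\{2,3\}$) cancels exactly against the sign acquired in passing from $(\xdag,\ydag)$ to $(x,y)=g_i\cdot(\xdag,\ydag)$ (trivial for $i\in\{1,4\}$, a flip for $i\in\{2,3\}$). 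The main obstacle I anticipate is bookkeeping rather than conceptual difficulty: one must line up the four cases of $\psi$ with the four subsets $S_i$, invoke Lemma~\ref{eight} to know $I(s(w),j)\neq0$, and verify that the length-parity sign and the $\zeta$-sign genuinely conspire. In particular, the claims that $a$ is $\sign$-preserving on $\Uplus$ and that $\Fd\setminus\{0\}\subset\Uplus$ should be checked carefully (the former from $a\in SO_0(U,N)$ or by direct inspection of $N$), as should the parity statement for $\ell(w)$ on $T$ versus $Tr_1$.
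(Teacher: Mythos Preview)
Your proposal is correct and follows essentially the same approach as the paper: split $L_1\cap\Fdo$ into the four pieces coming from $\psi$, identify $(\xdag,\ydag)=\phi(w,j)$ with $(w,j)\in S_i$, compute $\bep(w,j)=(-1)^{\ell(w)+j}I(s(w),j)$, and check that the parity sign cancels against the $\zeta$-flip while $a$ preserves $\sign$. The paper carries this out for the representative case $g=a\zeta$ (i.e.\ $i=2$) and declares the others similar, whereas you give a uniform argument across all four cases by reading off $I(s(w),j)$ from the sign of $j$ via $\ydag=\tfrac{j}{2}+B$; this is a mild organizational improvement but not a different method.
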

\begin{proof}
As in the above discussion, we split this into the four cases $(x,y) \in L_1 \cap g \Fd$ for (i) $g = e$, (ii) $g = a\zeta$, (iii) $g = \zeta$ and (iv) $g = a$. We only consider case (ii), which is representative of the calculation needed for the other cases.  For $(x,y) \in L_1 \cap a\zeta \Fd$, we have $ (\xdag,\ydag) = (a\zeta)^{-1}(x,y) \in L_2 \cap \Fd$. Let  
$(\xdag,\ydag) = \phi(w,j)$ where $w = \ttau^{2n}$, $n \in \integers$, $j \in 2 \integers+1$. Now $\epsilon(\xdag,\ydag) = \bep(w,j) = - I(s(w),j)$. Now, $\epsilon(\xdag,\ydag)$ equals $-1$ if  $n, j \geq 0$ and $1$ if $n, j <0$. 
In other words $\epsilon(\xdag,\ydag) = - \sign(\xdag,\ydag) = \sign(x,y)$. The last equality follows from the fact that $a$ leaves sign invariant, while $\zeta$ reverses it. 
\end{proof}

Since $\sign(x,y)$ and $N(x,y)$ are constant on $G_0$-orbits, we have :
\begin{equation} \label{hh}
\Hcl = t^{-2B} z^{-\frac{1}{2}} \sum_{\substack{(x,y) \in L_1 \cap \Uplus \\ (x,y) \text{ mod } G_0}} \sign(x,y) \; q^{\frac{1}{2} N(x,y)}\; t^{2\ydag} \, z^{(m+2) \xdag - m\ydag}
\end{equation}
Finally, putting together corollary \ref{maincor}, equation \eqref{hh} and our choice of the pair $(\Lambda, \lambda)$ in \S \ref{secllchoice}, we deduce theorem \ref{mainthm}. \qed

\end{document}